\newtheorem{Theorem}{Theorem}
\newtheorem*{Theorem A}{Theorem A}
\newtheorem{Lemma}{Lemma}
\newtheorem{Remark}{Remark}
\begin{document}

\title{An answer to Hammerlindl's question on strong unstable foliations}

\author{Ren Yan\thanks{School of Mathematical Sciences,
Peking University, Beijing, China 100871. Email: renwyx379@163.com.}
 \and
Shaobo Gan\thanks{School of Mathematical Sciences,
Peking University, Beijing, China 100871. Email: gansb@pku.edu.cn.}
\and
Pengfei Zhang \thanks{Department of Mathematics and Statistics,
Universityof Massachusetts, Amherst MA 01003, USA. Email: pzhang@math.umass.edu. }}


\maketitle

\begin{abstract}
Let $f: \mathbb{T}^3\to\mathbb{T}^3$ be a partially hyperbolic diffeomorphism on the 3-torus $\mathbb{T}^3$.
In his thesis, Hammerlindl proved that for lifted center foliation $\mathcal{F}^c_f$,
there exists $R>0$, such that for any $x\in \mathbb{R}^3$, ${\cal
F}^c_f(x)\subset B_R (x+E^c)$, where $\mathbb{R}^3=E^s\oplus E^c\oplus
E^u$ is the partially hyperbolic splitting of the linear model of $f$.
The same is true for the lifted center-stable and center-unstable foliations.
Then he asked if the this property is true for strong stable and strong unstable foliations.
In this note, we give a
negative answer to Hammerlindl's question.
\end{abstract}


Let $M$ be a compact $C^\infty$ Riemannian manifold without boundary. A
diffeomorphism $f: M\to M$ is said to be a partially hyperbolic diffeomorphism (abbrev.
PHD), if there exists $Tf$-invariant splitting $TM=E^s\oplus E^c\oplus E^u$
and constants $C\ge 1$ and
\begin{equation}\label{abs}
0<\lambda<\hat{\gamma}<1<\gamma<\mu,
\end{equation}
such that for any $x\in M, n\ge 0$,
\begin{equation}\label{phd}
\begin{array}{cccl}\displaystyle
& |Tf^n v|&\le C\lambda^n|v|, &\forall\, v\in E^s(x),\\
C^{-1}\hat{\gamma}^n|v|\le &|Tf^n v|&\le C\gamma^n|v|, &\forall\, v\in
E^c(x),\\
C^{-1}\mu^n|v|\le  &|Tf^n v|,& &\forall\, v\in E^u(x).
\end{array}
\end{equation}
Denote the joint subbundles by $E^{cs}=E^c\oplus E^s$, $E^{cu}=E^c\oplus E^u$.
We will use $E^\sigma_f$ ($\sigma=s,c,u,cs,cu$) to indicate the dependence
on $f$.

\begin{Remark}
Recently, the definition formulated in \eqref{phd} is called absolutely
partial hyperbolicity, in contrast to another well explored formulation--pointwise partial
hyperbolicity, for which the constants in \eqref{abs} are replaced by continuous functions,
and in \eqref{phd}, by the corresponding cocycles
(see \cite{HP13a} for more details and the reference therein).
We will not touch upon the difference between this two kinds of definitions in this note.
\end{Remark}

E. Pujals had an informal
conjecture about the complete classification of all partially
hyperbolic diffeomorphisms on 3-manifolds (see \cite{BW05}
for some recent development and improvement of the conjecture).
Inspired by Pujals' conjecture, great
progresses have been made very recently towards such a classification
of partially hyperbolic diffeomorphisms on various 3D manifolds.
See \cite{Ham13} for the case $M=\mathbb{T}^3$, \cite{HP13a} for 3D nilmanifolds,
and \cite{HP13b} for 3D manifolds with solvable fundamental groups.

Now let's state the question of Hammerlindl. For simplicity, we will
concentrate on partially hyperbolic diffeomorphisms on 3-torus
$\mathbb{T}^3$ such that $\dim E^\sigma=1, \sigma=s,c,u$. A partially
hyperbolic diffeomorphism $f:\mathbb{T}^3\to\mathbb{T}^3$ is {\it dynamically
coherent}, if there exist two foliations ${\cal F}^{cs}, {\cal F}^{cu}$
tangent to $E^{cs}$, $E^{cu}$ respectively. Let
$F=A+\phi:\mathbb{R}^3\to\mathbb{R}^3$ be a lift of $f$, where
$\phi(x+{\bf n})=\phi(x)$ for any $x\in \mathbb{R}^3$ and ${\bf n}\in
\mathbb{Z}^3$. According to \cite{BBI04}, if a PHD
$f:\mathbb{T}^3\to\mathbb{T}^3$ is dynamically coherent, then
$A:\mathbb{R}^3\to\mathbb{R}^3$ is also a PHD (denote its splitting as
$E^s_A\oplus E^c_A\oplus E^u_A$).

In his remarkable thesis \cite{Ham13},
A. Hammerlindl proved that there exists $R>0$, such
that ${F}_F^\sigma(x)\subset B_R(x+E^\sigma)$, where $\sigma=c,cs,cu$,
${\cal F}_F^\sigma$ are the lifted foliations of ${\cal F}_f^\sigma$,
and $B_R(A)=\{x\in\mathbb{R}^3: d(x, y)\le R \text{ for some } y\in A\}$ for any subset
$A\subset \mathbb{R}^3$.
Then he raised the following  (see Page 15 of Thesis):

\vskip.5cm

\noindent {\bf Question: } Let
$f:\mathbb{T}^3\to\mathbb{T}^3$ be a dynamically coherent PHD. Does there
exist $R>0$, such that ${\cal F}_F^\sigma(x)\subset B_R(x+E^\sigma_A)$,
$\sigma=s,u$?

\begin{Remark}
According to \cite{BBI09}, every PHD on 3-torus is dynamically
coherent.
\end{Remark}

In this note, we will give a negative answer to Hammerlindl's question.
Let $A\in SL(3,\mathbb{Z})$ be an integer matrix with three different positive eigenvalues:
$0<\lambda_1<1<\lambda_2<\lambda_3$
(replace $A$ by $A^{-1}$ if $0<\lambda_1<\lambda_2<1<\lambda_3$).
Then the induced map
on 3-torus, $f_A:\mathbb{T}^3\to \mathbb{T}^3$,
can be viewed either as an Anosov diffeomorphism,
or as a PHD with an expanding center bundle $E^c$.
For concreteness, it is easy to see that
$$
A=\left(
\begin{array}{ccc}
 2 & 1 & 1 \\
 1 & 2 & 0 \\
 1 & 0 & 1
\end{array}
\right)
$$
is such a kind of matrix.

\begin{Theorem}\label{main}
Let $A\in SL(3,\mathbb{Z})$  be an integer matrix with three different positive eigenvalues:
$0<\lambda_1<1<\lambda_2<\lambda_3$.
Then there exists a $C^1$ neighborhood ${\cal U}$ of $f_A$ such that for
any $f\in{\cal U}$, if $f$ is accessible, then for any $R>0$, $x, y\in
\mathbb{R}^3$, and for any lift $F$ of $f$, ${\cal F}_F^u(y)\not\subset
B_R(x+E^u)$.
\end{Theorem}

\begin{Remark}
Recall that a PHD $f$ on $M$ is said to be accessible, if for any points $p,q\in M$,
there exists a piecewise smooth path moving from $p$ to $q$ along stable or unstable leaves of $f$.
Note that accessibility is a $C^1$-open and $C^\infty$-dense
property among PHDs on 3-torus,  see \cite{DW03, Did03, RHRHU08}.
Hence the above theorem answers Hammerlindl's question negatively.
\end{Remark}

\begin{proof}[Proof of Theorem \ref{main}]
It is well known that being an Anosov diffeomorphism is a $C^1$-open property
 (e.g., see \cite{Fra70}). So is being a PHD.
So we first pick a $C^1$-small neighborhood $\cal U$ of $f_A$, such that any $f\in{\cal U}$ is
still Anosov and PHD. Now let $f\in\mathcal{U}$.
To avoid any possible confusion, we list the following facts:
\begin{itemize}
\item viewing $f$ as an Anosov system: let
$\mathcal{W}^s_f(x)$ and $\mathcal{W}^u_f(x)$ be the stable and unstable manifolds
of $f$ at $x$.

\item  viewing $f$ as a PHD: Let $\mathcal{F}^s_f$ and $\mathcal{F}^u_f$
be the strong stable and strong unstable foliations
of $f$.

\item viewing $f$ as a perturbation of a PHD with linear center foliation:
$f$ is dynamically coherent (see \cite{HPS}). Let $\mathcal{F}^{cs}_f$  and
$\mathcal{F}^{cu}_f$, and $\mathcal{F}^{c}_f$
be the center-stable, center-unstable and center foliations.

\item the relation between the different viewpoints:
$\mathcal{F}^s_f=\mathcal{W}^s_f$ and
$\mathcal{F}^{cu}_f=\mathcal{W}^{u}_f$.
\end{itemize}

By the structural stability of Anosov systems,
there exists uniquely a topological conjugacy
$h:\mathbb{T}^3\to\mathbb{T}^3$ close to identity
between $f$ and $f_A$, i.e., $h\circ f=f_A\circ h$.
Denote by $\pi: \mathbb{R}^3\to\mathbb{T}^3$ the standard universal
covering map: i.e., if identifying
$\mathbb{T}^3=\mathbb{R}^3/\mathbb{Z}^3$, $\pi$ is just the quotient map.
Let $F, H:\mathbb{T}^3\to\mathbb{T}^3$ be lifts of $f, h$, i.e., $\pi
F=f\pi$, $\pi H=h\pi$. Moreover, we can pick $F$ and $H$ such that $H\circ F=A\circ H$.

Let $z=H^{-1}(0)$. Then $H\circ F(z)=A\circ H(z)=A(0)=0$ and hence $F(z)=z$.
Since $H$ is a conjugacy between Anosov
systems, $H({\cal F}_F^{cu}(z))={\cal F}_A^{cu}(0)$.

\begin{Lemma}\label{unstable}
If $H({\cal F}_F^u(z))\subset {\cal F}_A^{u}(0)$,
then $H({\cal F}_F^u(x))={\cal F}_A^{u}(Hx)$ for any
$x\in\mathbb{R}^3$.
\end{Lemma}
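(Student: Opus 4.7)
The plan is to introduce the ``good set''
$$
E := \{x \in \mathbb{R}^3 : H({\cal F}_F^u(x)) \subset {\cal F}_A^u(Hx)\},
$$
prove $E = \mathbb{R}^3$, and then upgrade the inclusion to equality on each leaf. The hypothesis says $z \in E$. If $x \in E$ and $y \in {\cal F}_F^u(x)$, then ${\cal F}_F^u(y) = {\cal F}_F^u(x)$ and ${\cal F}_A^u(Hy) = {\cal F}_A^u(Hx)$, so $y \in E$; thus $E$ is saturated by strong unstable leaves of $F$, and in particular ${\cal F}_F^u(z) \subset E$.

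I would next verify three invariance properties of $E$. First, $E$ is fully $F$-invariant, because $H \circ F = A \circ H$ together with $F({\cal F}_F^u(x)) = {\cal F}_F^u(Fx)$ and $A({\cal F}_A^u(q)) = {\cal F}_A^u(Aq)$ gives $x \in E \iff Fx \in E$. Second, $E$ is $\mathbb{Z}^3$-translation invariant: since $h$ is close to the identity, the chosen lift $H$ satisfies $H(x+n) = Hx + n$ for every $n \in \mathbb{Z}^3$, and both foliations descend to $\mathbb{T}^3$. Third, $E$ is closed: for $x_k \in E$ with $x_k \to x$ and any $y \in {\cal F}_F^u(x)$, continuity of the foliation yields $y_k \in {\cal F}_F^u(x_k)$ with $y_k \to y$; each $Hy_k - Hx_k$ lies in the closed subspace $E_A^u$, so its limit $Hy - Hx$ does too.

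Consequently $\pi(E) \subset \mathbb{T}^3$ is a closed, $f$-invariant set saturated by strong unstable leaves of $f$ and containing the leaf ${\cal F}_f^u(\pi z)$. I would then invoke minimality of the strong unstable foliation ${\cal F}_f^u$ on $\mathbb{T}^3$ (valid after shrinking $\mathcal{U}$ if necessary, thanks to the irrationality of $E_A^u$ and the fact that minimality of the strong unstable foliation is a standard feature of PHDs $C^1$-close to a hyperbolic linear automorphism of $\mathbb{T}^3$ with irrational strong unstable direction). This forces $\overline{{\cal F}_f^u(\pi z)} = \mathbb{T}^3$, hence $\pi(E) = \mathbb{T}^3$ and $E = \mathbb{R}^3$. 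The inclusion $H({\cal F}_F^u(x)) \subset {\cal F}_A^u(Hx)$ therefore holds for every $x$; to promote it to equality, note that $H$ is a homeomorphism of $\mathbb{R}^3$, so $H|_{{\cal F}_F^u(x)}$ is a proper continuous injection of the line ${\cal F}_F^u(x) \cong \mathbb{R}$ into the line ${\cal F}_A^u(Hx) \cong \mathbb{R}$, and any such map is surjective.

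The step most deserving of care is the appeal to minimality of ${\cal F}_f^u$, since minimality of a codimension-$2$ foliation is not automatically preserved under $C^1$-perturbation. Should one prefer to avoid it, an alternative route is to show directly that $E$ is saturated by strong stable leaves of $F$ as well and then use accessibility of $f$ to conclude that $\pi(E)$ is an entire accessibility class, namely $\mathbb{T}^3$; however, transferring the inclusion from ${\cal F}_F^u(x)$ to ${\cal F}_F^u(y)$ when $y \in {\cal F}_F^s(x)$ appears to demand genuinely more input than the hypothesis immediately provides.
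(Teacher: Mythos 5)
Your overall skeleton --- a closed, $u$-saturated, translation-invariant good set $E$, continuity of local leaves to pass to limits, and an argument upgrading the inclusion to equality --- is structurally close to the paper's proof, and those parts are fine: your closedness argument is exactly the paper's continuity step, and your proper-injection argument for surjectivity works (lifted strong unstable leaves are properly embedded in $\mathbb{R}^3$; for the fixed leaf one can alternatively use $A$-invariance of the image interval, as the paper does). The genuine gap is the step you yourself flag as delicate: you conclude $\pi(E)=\mathbb{T}^3$ by invoking minimality of the strong unstable foliation ${\cal F}^u_f$ of the \emph{perturbed} map $f$, asserting that this is a standard, $C^1$-robust feature of systems close to a hyperbolic linear automorphism with irrational strong unstable direction. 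That is not a standard citable fact and you offer no proof; robust minimality of the one-dimensional strong unstable foliation sitting inside the two-dimensional unstable foliation is a subtle matter --- indeed the whole point of this paper is that strong unstable leaves of perturbations need not track the linear direction, so properties of ${\cal F}^u_f$ cannot be imported from the linear model without argument. Your alternative route via accessibility is not carried out either (and you correctly note that saturating $E$ by stable leaves needs input the hypothesis does not give), so as written the central density step is unjustified.

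The gap is avoidable, and this is precisely what the paper does: you do not need density of \emph{every} leaf of ${\cal F}^u_f$, only of the single leaf ${\cal F}^u_f(\pi z)$ that $\pi(E)$ contains, and the hypothesis of the lemma provides this for free. Under the hypothesis, $H({\cal F}^u_F(z))$ is a nondegenerate $A$-invariant connected subset of the expanding line $E^u_A$ containing $0$, hence equals ${\cal F}^u_A(0)$; projecting, ${\cal F}^u_f(\pi z)=h^{-1}({\cal F}^u_{f_A}(0))$. Since the leaf ${\cal F}^u_{f_A}(0)$ of the \emph{linear} map is dense in $\mathbb{T}^3$ (irrational eigendirection) and $h$ is a homeomorphism, the leaf ${\cal F}^u_f(\pi z)$ is dense. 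Feeding this into your closedness and saturation argument yields $\pi(E)=\mathbb{T}^3$ with no appeal to robust minimality; with that substitution your proof closes and becomes essentially the paper's argument (the paper phrases it as: the dense set $S=h^{-1}({\cal F}^u_{f_A}(0))$ consists of good points, and the good-point condition is closed by continuity of local unstable plaques).
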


\begin{proof}[Proof of Lemma \ref{unstable}]
Since $H$ is a homeomorphism, $0\in H({\cal F}_F^u(z))$ and ${\cal
F}_A^{u}(0)$ is $A$-invariant, we have
$H({\cal F}_F^u(z))={\cal F}_A^{u}(0)$.

Denote by ${\cal F}_f^u(p, r)=\{q\in {\cal F}_f^u(p): d_u(q,p)\le r\}$,
where $d_u$ is the metric along ${\cal F}^u_f$. Similarly, we can define
${\cal F}_{f_A}^u(p, r)$, ${\cal F}_F^u(x, r)$, ${\cal F}_A^u(x, r)$.
Note that ${\cal F}_f^u(p, r)$ varies continuously with respect to the point $p$.
Following is a special fact due to the linearity of $A$:

\vskip.3cm

\noindent {\bf Fact. } For any $p\in\mathbb{T}^3$, ${\cal F}_{f_A}^u(p)$
is dense in $\mathbb{T}^3$.

\vskip.3cm

\noindent According to the above fact, there exists a dense subset
$S\triangleq H^{-1}({\cal F}_{f_A}^u(0))$
such that for any $r>0$ and $p\in S$, $h({\cal F}_f^u(p,r))\subset {\cal
F}_{f_A}^u(h(p))$. By the continuity of ${\cal F}^u$, we have that for any
$r>0$ and any $p\in\mathbb{T}^3$, $h({\cal F}_f^u(p,r))\subset {\cal
F}_{f_A}^u(h(p))$, which implies $h({\cal F}_f^u(p))={\cal
F}_{f_A}^u(h(p))$. This finishes the proof of the lemma.
\end{proof}

Now we continue the proof of Theorem \ref{main}.
If the assumption of the lemma were satisfied, combing with the fact that $h({\cal
F}_f^s(p))={\cal F}_{f_A}^s(h(p))$, we see that ${\cal F}(p)=:h^{-1}({\cal
F}_{f_A}^{su}(h(p)))$ forms a foliation such that for any $q\in {\cal
F}(p)$, ${\cal F}_f^u(q),{\cal F}_f^s(q)\subset {\cal F}(p)$. In particular, $f$ can't
accessible.

If $f$ is accessible, then
above argument implies that the hypothesis of Lemma~\ref{unstable} is not
satisfied. Therefore there exists a point $x\in{\cal F}^u_F(z)$ such that
$H(x)\notin E^u_A$.
Since $H(\mathcal{W}^u_F(z))=\mathcal{W}^u_A(0)$, we have $d(H(F^n x),
E^u_A)\to +\infty$ as $n\to+\infty$. This implies that for any $R>0$ and
any $x\in\mathbb{R}^3$, $H({\cal F}^u_F(z))\not\subset B_R(x+E^u_A)$. Since
$d(H, Id)$ is bounded, we conclude that for any $R>0$ and any
$x\in\mathbb{R}^3$, ${\cal F}_F(z)\not\subset B_R(x+E^u_A)$.
\end{proof}

\end{document}